\newtheorem{theorem}{Theorem}[section]
\newtheorem{Prop}{Proposition}
\newtheorem{lemma}{Lemma}
\newtheorem{Cor}{Corollary}
\theoremstyle{definition}
\newtheorem{definition}{Definiton}
\newtheorem{remark}{Remark}
\title{A structural property of Adian inverse semigroups}
\author{Muhammad Inam}
\email{s-minam1@math.unl.edu}
\author{John Meakin}
\email{jmeakin@math.unl.edu}
\author{Robert Ruyle }
\email{rruyle3@unl.edu}
\address{Department of Mathematics\\
University of Nebraska-Lincoln\\
Lincoln, NE 68588, USA}
\begin{document}

\date{\today}

\begin{abstract}

We prove that an inverse semigroup over an Adian presentation is
$E$-unitary.

\end{abstract}

\maketitle

\begin{center}
Dedicated to the memory of John M. Howie
\end{center}


\section
{Introduction}

Throughout this paper, $X$ is a non-empty set and
$R=\{(u_i,v_i):i\in I\}$ where
 $u_i,v_i\in X^+$. We refer to the pair $P=(X, R)$ as a {\it positive
 presentation}. The semigroup with the set $X$ of generators and the set $R$ of relations will be
 denoted by $Sgp\langle X \vert R\rangle$ and the corresponding group  will be denoted by
 $Gp\langle X \vert R\rangle$.  We  obtain two undirected
 graphs corresponding to $P$ as follows. The {\it left graph} $LG(P)$ of $P$ has set $X$ of
 vertices and
  has an undirected edge connecting $x$ and $y$ in $X$ corresponding to each relation of the
  form $(u_i,v_i)$ in $R$ where $x$ is the first letter of $u_i$ and $y$ is the first
  letter of $v_i$. The {\it right graph} $RG(P)$ is constructed dually, with edges connecting the
  last letters of pairs $(u_i,v_i)$ in $R$.  If there is no cycle in either the left graph or
  the right graph of a
  positive presentation, then
 the presentation is called a {\it cycle-free presentation} or an {\it Adian presentation}. These
 presentations were first studied by S. I. Adian in \cite{AD}, where it was shown
 that the semigroup $Sgp\langle X \vert R\rangle$ embeds in the group $Gp\langle X \vert R\rangle$.
 Unless stated otherwise,  we will consider our presentations to be Adian presentations throughout.

 A semigroup $S$ is called an {\it inverse semigroup} if for each $a\in S$ there exists a
 unique element $b\in S$ such that $aba=a$ and $bab=b$. This unique element $b$ is denoted
 by $a^{-1}$. It is well known that  idempotents commute in an inverse
 semigroup and that the product of two idempotents is an idempotent.
 If $S$ is an inverse semigroup, then the natural partial order on $S$ is defined, for
 $a, b \in S$, by
 $a\leq b$ if and only if $a=aa^{-1}b$.
 A congruence relation $\sigma$ is defined on $S$, for $a, b \in S$, by $a\sigma b$ if
 and only if there exists an element
 $c\in S$ such that $c\leq a,b$. Then $\sigma$ is the minimum group congruence
 relation on $S$, i.e., $S/\sigma $ is the maximum group homomorphic image of
 $S$. We refer the reader to the text \cite{IS} for these facts and further details about
 inverse
 semigroups. The following proposition concerning idempotents and the natural partial order
 is standard in the literature on inverse semigroups; we will use these facts often in this paper.

 \begin{Prop}\label{prop0}
 Let $S$ be an inverse semigroup.
 \begin{enumerate}
 \item[(i)] If $e$ is an idempotent and $a \leq e$ in $S$, then $a$ is an idempotent.
 \item[(ii)] If $x \in S$ and $e$ is an idempotent of $S$, then $xex^{-1}$ is an idempotent.
 \item[(iii)] Suppose $x_0, \dots, x_n \in S$ and suppose $e_1, \dots, e_n$ are idempotents.
 Then $x_0e_1x_1 \cdots e_nx_n \leq x_0x_1\cdots x_n$.
 In particular, if $x_0x_1 \cdots x_n$ is an idempotent, then $x_0e_1x_1 \cdots e_nx_n$ is an
 idempotent.
 \end{enumerate}
 \end{Prop}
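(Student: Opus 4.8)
The plan is to handle the three parts in order, relying only on the definition $a\le b \iff a = aa^{-1}b$ together with the two stated facts that idempotents commute and that a product of idempotents is again idempotent. For (i), I would read $a\le e$ as $a = aa^{-1}e$ and observe that this exhibits $a$ as a product of the two idempotents $aa^{-1}$ and $e$; since a product of idempotents is idempotent, $a$ is idempotent. For (ii), I would compute $(xex^{-1})^2 = xe(x^{-1}x)ex^{-1}$, use that the idempotents $e$ and $x^{-1}x$ commute to collapse $e(x^{-1}x)e = e(x^{-1}x)$, and then apply $x^{-1}xx^{-1} = x^{-1}$ to return to $xex^{-1}$; hence $xex^{-1}$ is idempotent.

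Part (iii) is the substantive one, and for it I would first record the standard idempotent reformulation of the order: $a\le b$ iff $a = fb$ for some idempotent $f$ (and dually $a = bf$). One direction is immediate from the definition with $f = aa^{-1}$, and the converse is a one-line computation using commuting idempotents. This reformulation yields the multiplicative compatibility of $\le$: if $a\le b$ then $ca\le cb$ and $ac\le bc$ (the witnessing idempotent carries over), so by transitivity $a\le b$ and $c\le d$ give $ac\le bd$, and iterating, $a_i\le b_i$ for $0\le i\le n$ gives $a_0\cdots a_n\le b_0\cdots b_n$. Since $e_ix_i\le x_i$ is just the idempotent characterization with $f=e_i$, grouping $x_0e_1x_1\cdots e_nx_n = x_0\cdot(e_1x_1)\cdots(e_nx_n)$ and comparing factorwise with $x_0\cdot x_1\cdots x_n$ gives $x_0e_1x_1\cdots e_nx_n\le x_0x_1\cdots x_n$. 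The final clause is then immediate from (i).

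The only step needing genuine care is the packaging of (iii): deriving the multiplicative compatibility of the natural partial order cleanly from the bare definition, and grouping the interspersed idempotents so that the factorwise comparison applies. Parts (i) and (ii) are direct consequences of the two stated semilattice facts and the basic identity $x^{-1}xx^{-1}=x^{-1}$, so I expect no difficulty there.
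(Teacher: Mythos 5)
Your proof is correct and complete: parts (i) and (ii) follow exactly as you say from commuting idempotents and $x^{-1}xx^{-1}=x^{-1}$, and your reduction of (iii) to the two-sided idempotent characterization of $\leq$ and its compatibility with multiplication is the standard argument. Note that the paper offers no proof of this proposition at all --- it is stated as a standard fact from the inverse semigroup literature (cf.\ Lawson's book, cited as \cite{IS}) --- so there is nothing to compare against; your write-up supplies precisely the routine verification the authors omit.
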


 Inverse semigroups form a variety of algebras in the sense of universal algebra,
 and so free inverse semigroups exist. The inverse semigroup with the set $X$ of generators and
 the set $R$ of relations is denoted by $Inv\langle
 X \vert R\rangle$. This is the quotient of the free inverse semigroup on
 $X$ obtained by imposing the relations $u_i = v_i$ in $R$.
  See Stephen's paper \cite{SG} for basic information
 about presentations of inverse semigroups.
 A basic fact is that if an inverse semigroup $S$ is given by a
 presentation
 $S = Inv\langle X \vert R\rangle$ then the group $S/\sigma$ is given by the presentation
 $Gp\langle X \vert R\rangle$.

 An inverse semigroup $S$ is called {\it $E$-unitary} if the natural homomorphism
from $S$ onto $G = S/\sigma$ is
 {\it idempotent-pure}, i.e., if the inverse image of the identity of
 $G$ is equal to the set of idempotents of $S$. We refer to \cite{IS}
 for equivalent definitions and further information about the importance of $E$-unitary
 inverse semigroups in the
 literature on inverse semigroups.

 In this paper we prove that if $P=(X, R)$ is an Adian
 presentation, then the inverse semigroup  $S = Inv\langle
 X \vert R\rangle$ is $E$-unitary.

 \section
 {Main Theorem}

 In order to set the foundations of our main theorem, we first give a common definition
 of a
 {\it van Kampen diagram} over a presentation $(X, R)$ and then  describe some properties of
 van Kampen diagrams  over an Adian presentation $(X, R)$.

 A van Kampen diagram $\Delta$ is a planar, finite, connected, simply
 connected, oriented 2-complex (often called a ``map"). There is an involution
 $e \rightarrow e^{-1}$ on the edges of the diagram such that $(e^{-1})^{-1} = e$
 and $e^{-1} \neq e$. The pair $\{e,e^{-1}\}$ may be considered as an undirected edge
 with one of these edges viewed as positively oriented and the other as negatively oriented.
 A van Kampen diagram over the
 presentation $(X,R)$ satisfies:
 \begin{enumerate}
 \item[(i)] Each directed positively oriented edge of the 1-skeleton is
 labeled by a letter in $X$ and its inverse is labeled by the inverse of that letter.
 \item[(ii)] The 2-cells (sometimes called
 ``regions" or  ``faces") are homeomorphic to open disks. The boundary of each 2-cell  is
 labeled by a cyclically reduced conjugate of
 $uv^{-1}$ for some relation $(u,v) \in R$.
 \item[(iii)] There is a distinguished base vertex $O$ that lies on the topological boundary of
 $\Delta$.
 \end{enumerate}

The boundary cycle of $\Delta$ that begins and ends at the base
vertex $O$, and travels in the counterclockwise direction around the
boundary, is labeled by some  (not necessarily freely reduced) word
$w \in (X\cup X^{-1})^*$. In this case, we say that $\Delta$ is a
van Kampen diagram for $w$ over $(X,R)$.

A van Kampen diagram $\Delta$ is said to be {\it reduced} if it
satisfies the additional  condition:
\begin{enumerate}
\item[(iv)] There do not exist two 2-cells with an edge in common to their boundaries
that are mirror images of each
other.

\end{enumerate}

In this paper we will be working mainly with reduced van Kampen
diagrams. The well known Lemma of van Kampen says: (1) The boundary
label of a reduced van Kampen diagram over $(X,R)$ is a  word $w$
that is equal to the identity in $G = Gp\langle X \vert R\rangle$.
(2) For every every  word $w$ representing the identity of $G$ there
exists a reduced van Kampen diagram that has $w$ as its boundary
label.

We remark that some authors require a van Kampen diagram to have a
{\it reduced} word as its boundary label, and in fact some require
the boundary label to be a {\it cyclically reduced} word. It is
convenient for our purposes to make no such restriction however.
 We refer to the book by Lyndon and Schupp
 \cite{LS} for more detail and for basic information about van Kampen
 diagrams. Such diagrams are referred to as {\it $g$-diagrams} in the
 paper of Remmers \cite{RM}. A $g$-diagram in Remmers' sense is
 required to have a cyclically reduced word as boundary label, but
 this restriction is in fact not essential.

\begin{remark}\label{rem0}
It follows from van Kampen's Lemma that the inverse semigroup
 $S = Inv\langle X \vert R\rangle$ is $E$-unitary if and only if, for every van Kampen diagram
 $\Delta$ over $(X, R)$, the   word $w$ labeling the boundary cycle of $\Delta$ that
 starts and ends at the distinguished vertex
 $0$ is an idempotent of $S$.

\end{remark}

 Each 2-cell of a van Kampen diagram over an Adian presentation $P = (X, R)$
 is {\it two-sided}. The boundary of a such a 2-cell
 can be viewed as a birooted graph, where one side of a relation $(u,v)\in R$ labels a path $p$ from
 the initial vertex to the terminal vertex of the graph, while the other side of the relation labels
 a different path $q$ from the initial vertex to the terminal vertex of the graph. We refer to these
 paths as the {\it sides} of the 2-cell.  The path $pq^{-1}$ is a boundary cycle of the 2-cell.

 A vertex $\alpha$ of a van Kampen diagram $\Delta$ is called a {\it
 source} if all edges of $\Delta$ with initial vertex $\alpha$ are
 positively labeled. {\it Sinks} of $\Delta$ are defined dually. The
 following fact about van Kampen diagrams over Adian presentations
 was proved by Remmers \cite{RM} (Theorem 4.3) for diagrams whose
 boundary label is a cyclically reduced word: however, Remmers' proof
 carries over verbatim for the more general notion of van Kampen
 diagrams that we are considering in this paper.

\begin{lemma} \label{lem0}
Let $\Delta$ be a reduced van Kampen diagram for a word $w$ over an Adian
presentation $(X, R)$. Then $\Delta$ has no interior sources and no interior sinks.
\end{lemma}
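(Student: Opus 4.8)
The plan is to argue by contradiction, treating the source case and deducing the sink case by the obvious left--right duality (replacing $LG(P)$ by $RG(P)$ and initial vertices of cells by terminal vertices). So suppose $\Delta$ has an interior source $\alpha$. Since $\alpha$ lies in the interior of $\Delta$, the planar structure around $\alpha$ gives a cyclic sequence of edges $e_1, \dots, e_k$ issuing from $\alpha$ together with a cyclic sequence of $2$-cells $F_1, \dots, F_k$, where $F_i$ is the cell bounded at $\alpha$ by the consecutive edges $e_i$ and $e_{i+1}$ (indices read modulo $k$). Because $\alpha$ is a source, every $e_i$ is positively oriented away from $\alpha$; let $x_i \in X$ denote its label.

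The first key step is a local analysis of each incident cell using the two-sided structure of $2$-cells over an Adian presentation. For a cell $F$ with boundary $pq^{-1}$, where $p$ and $q$ are the two positively labelled sides running from the initial vertex $\iota(F)$ to the terminal vertex $\tau(F)$, every vertex other than $\iota(F)$ has at least one incident boundary edge of $F$ oriented \emph{into} it. Since $\alpha$ is a source, no edge at $\alpha$ points inward, so $\alpha = \iota(F_i)$ for each $i$; consequently $e_i$ and $e_{i+1}$ are precisely the first edges of the two sides of $F_i$, and $x_i, x_{i+1}$ are the first letters of the two sides of the corresponding relation. By definition these letters determine an edge of the left graph $LG(P)$, and they are distinct (equal first letters would produce a loop, hence a cycle, in $LG(P)$), so in particular $k \ge 2$. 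Reading once around $\alpha$ therefore produces a closed walk $x_1 - x_2 - \cdots - x_k - x_1$ in $LG(P)$ of positive length.

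The heart of the argument is to contradict either acyclicity of $LG(P)$ or the reducedness of $\Delta$. Since $LG(P)$ is a forest, a nontrivial closed walk in it must backtrack: there is an index $i$ for which the edge contributed by $F_{i-1}$ and the edge contributed by $F_i$ are the \emph{same} edge of $LG(P)$, forcing $x_{i-1} = x_{i+1}$. Because $LG(P)$ is acyclic it carries at most one edge between any pair of vertices and no loops, so this single left-graph edge comes from a single relation $(u,v)$; thus $F_{i-1}$ and $F_i$ realize the same relation, both have $\alpha$ as initial vertex, and both have the shared edge $e_i$ as the first edge of the same side. Reading each boundary cycle from $\alpha$ starting along $e_i$ then yields the identical word $uv^{-1}$, while $F_{i-1}$ and $F_i$ lie on opposite sides of $e_i$. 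This is exactly the configuration forbidden by condition (iv): the two cells are mirror images across their common edge $e_i$, contradicting the assumption that $\Delta$ is reduced.

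I expect the delicate point to be precisely this last identification, namely verifying that a backtrack in the left-graph walk genuinely forces two adjacent cells to be mirror images rather than merely to share a boundary letter. This hinges on using acyclicity twice---once to force the backtrack and once to guarantee that the relation realized by the two cells is uniquely determined---and on carefully matching the orientation and the choice of sides so that the two boundary labels coincide as words read from $\alpha$ along $e_i$. Once this is in place the contradiction is immediate, and the dual computation with $RG(P)$ disposes of interior sinks.
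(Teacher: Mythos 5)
Your argument is correct. The paper does not prove this lemma itself---it cites Remmers (Theorem 4.3 of \cite{RM}) and notes that his proof carries over verbatim---and what you have written is essentially Remmers' argument: every $2$-cell incident to an interior source must have that vertex as its initial vertex, the first letters of the two sides trace a closed walk in the left graph, acyclicity forces a backtrack that (since parallel edges and loops are excluded) identifies the relation realized by two cells adjacent across a common edge, and the resulting mirror-image pair contradicts reducedness, with the sink case following by the dual argument in the right graph.
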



The following lemma follows from the above discussion and from Lemma 2.2
and Theorem 4.3 of \cite{RM}.

\begin{lemma}  If $\Delta$ is a reduced van Kampen diagram over an Adian presentation,
then $\Delta$ satisfies the following conditions:
\begin{enumerate}
 \item[(i)] Every 2-cell of $\Delta$ is two-sided.

 \item[(ii)] $\Delta$ contains no directed (i.e., positively labeled)  cycles.

 \item[(iii)] Every positively labeled interior edge of $\Delta$ can be extended to a directed
   transversal of $\Delta$.
\end{enumerate}
\end{lemma}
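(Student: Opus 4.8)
The plan is to establish the three conditions in turn, observing that (i) is already essentially contained in the discussion preceding the statement, that (ii) is the substantive point and rests squarely on the cycle-free hypothesis (this is where I invoke Remmers' Lemma 2.2), and that (iii) is then a short deduction from (i), (ii) and Lemma \ref{lem0}. For (i) I would simply unwind the definition of a 2-cell over $(X,R)$. Its boundary is labelled by a cyclically reduced conjugate of $u_iv_i^{-1}$ for some relation $(u_i,v_i)\in R$, and since $u_i,v_i\in X^+$ are positive words, read cyclically this label is one maximal positive block (from $u_i$) followed by one maximal negative block (from $v_i^{-1}$). Hence the boundary has a unique vertex at which both incident edges point outward and a unique vertex at which both point inward; these are the initial and terminal vertices, and the two positive paths joining them are the two sides. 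This is exactly the content of the paragraph on two-sided 2-cells, so (i) needs no further work.

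For (ii), suppose toward a contradiction that $\Delta$ contains a directed cycle $C$. Reading $C$ yields a non-empty positive word $w\in X^+$ which, being the label of a closed path in the simply connected complex $\Delta$, is trivial in $G=Gp\langle X\mid R\rangle$. The key point, and what I expect to be the main obstacle of the whole lemma, is to rule this out. I would do so using the cycle-free hypothesis as packaged in Remmers' Lemma 2.2: a directed cycle in a reduced diagram over an Adian presentation forces a cycle in the right graph $RG(P)$ (dually, in the left graph $LG(P)$), contradicting the defining property of an Adian presentation. Alternatively, one may appeal to Adian's embedding theorem directly, as recalled in the introduction: $w$ represents an element of $Sgp\langle X\mid R\rangle$, which embeds in $G$, so $w$ cannot represent the identity of $G$. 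Either route disposes of (ii), and I expect the verification that a directed cycle produces a right- or left-graph cycle to be the only genuinely non-formal step.

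Finally, for (iii), let $e$ be a positively labelled interior edge and extend it to a maximal directed path in both directions. By Lemma \ref{lem0} no interior vertex of $\Delta$ is a source or a sink, so the path can always be continued at an interior vertex: the terminal vertex of the current edge, not being a sink, has an outgoing positive edge, and dually its initial end, not being a source, has an incoming positive edge. By (ii) there are no directed cycles, so no vertex is ever repeated; as $\Delta$ is finite the extension process must terminate, and it can only halt at a vertex that is a sink (going forward) or a source (going backward), each of which lies on the boundary by Lemma \ref{lem0}. The resulting directed path thus runs from a boundary source to a boundary sink through $e$, i.e.\ it is a directed transversal, while (i) ensures that the path meets each 2-cell it enters along a single side. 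This completes the deduction of (iii) from (i), (ii) and Lemma \ref{lem0}.
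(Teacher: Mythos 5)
Your overall strategy mirrors the paper's, which offers no proof at all beyond the sentence ``follows from the above discussion and from Lemma 2.2 and Theorem 4.3 of [RM]'': part (i) is indeed just the preceding paragraph on two-sided 2-cells (together with the observation, implicit in your block decomposition, that $u_i$ and $v_i$ cannot share a first or last letter, since that would already be a loop in the left or right graph), and your derivation of (iii) from (i), (ii) and Lemma \ref{lem0} is correct in substance. One small imprecision in (iii): you extend $e$ to a \emph{maximal} directed path and argue it can only halt at a sink or source; but a maximal directed path could pass through a boundary vertex that is not a sink and re-enter the interior, and then it would not be a transversal (all non-endpoint vertices must be interior). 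The correct stopping rule is to halt at the \emph{first} boundary vertex reached in each direction; Lemma \ref{lem0} guarantees you can always continue while you are still at an interior vertex, and (ii) plus finiteness guarantees you eventually leave the interior.

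The genuine gap is in (ii), the one place where you venture an argument of your own. Your primary route simply defers the ``directed cycle forces a cycle in $LG(P)$ or $RG(P)$'' step to Remmers, which is exactly what the paper does, so no complaint there. But the advertised alternative via Adian's embedding theorem is a non sequitur as stated: injectivity of $Sgp\langle X\mid R\rangle \to Gp\langle X\mid R\rangle$ says that distinct semigroup elements have distinct images, not that the identity of $G$ is outside the image. If a positive word $w$ satisfied $w=1$ in $G$, the embedding would only tell you that $wx=x$ and $xw=x$ hold already in the semigroup (since both sides are positive words equal in $G$), i.e.\ that $[w]$ is an identity element of $Sgp\langle X\mid R\rangle$; ruling \emph{that} out requires a further argument about Adian semigroups, and in fact ``no positive word is trivial in $G$'' is essentially equivalent to the no-positive-cycles statement you are trying to prove, so this route is circular in spirit and incomplete in substance. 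Either drop the alternative or carry out the left-/right-graph argument honestly: take a directed cycle enclosing a minimal number of 2-cells, note the enclosed subdiagram cannot be a tree (a positively labeled closed path cannot backtrack), and derive the contradiction from Lemma \ref{lem0} applied to that subdiagram together with the two-sidedness of its 2-cells.
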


Here, by a {\it directed transversal} of $\Delta$  we mean a
positively labeled path between two distinct boundary vertices of
$\Delta$, all of whose vertices are distinct,  all of whose edges
are interior to $\Delta$, and all of whose vertices except the
initial and terminal vertex are interior vertices in $\Delta$.

The following result is the main theorem of the paper.

\begin{theorem}\label{main}

An inverse semigroup $S=Inv\langle X \vert R\rangle$ over an Adian
presentation is E-unitary.

\end{theorem}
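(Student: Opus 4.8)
The plan is to invoke Remark~\ref{rem0}, which reduces the theorem to the purely combinatorial assertion that for every reduced van Kampen diagram $\Delta$ over the Adian presentation $(X,R)$, and every choice of base vertex on $\partial\Delta$, the boundary word $w$ represents an idempotent of $S$. Write $[w]$ for the image in $S$ of a word $w\in(X\cup X^{-1})^*$, and argue by induction on the number of $2$-cells of $\Delta$. In the base case $\Delta$ has no $2$-cells, so it is a finite tree; its boundary cycle traverses each edge once in each direction, so $w$ freely reduces to the empty word and hence is trivial in the free group on $X$. Since a word is idempotent in the free inverse semigroup precisely when it is trivial in the free group, $[w]$ is idempotent in the free inverse semigroup, and therefore its image in $S$ is idempotent, as homomorphisms carry idempotents to idempotents. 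Equivalently, one unwinds the tree recursively, writing $w$ as a product of factors of the form $x\,w'\,x^{-1}$ with $w'$ a shorter tree word, each idempotent by Proposition~\ref{prop0}(ii).

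For the inductive step, suppose $\Delta$ has at least one $2$-cell. The key geometric input, isolated as a lemma below, is that $\Delta$ then has a $2$-cell $\pi$ one of whose two sides lies entirely on $\partial\Delta$. Let $(u,v)\in R$ be the relation whose conjugate labels $\partial\pi$, with $u$ labelling the exposed side $s$ of $\pi$ from its initial corner to its terminal corner and $v$ labelling the opposite side. Reading the boundary of $\Delta$ from a base vertex chosen off the interior of $s$, we have $w=w_1\,u^{\pm1}\,w_2$, where $u^{\pm1}$ is the contribution of $s$. Excising $\pi$ and replacing the arc $s$ by the opposite side of $\pi$ yields a reduced van Kampen diagram $\Delta'$ over $(X,R)$ with one fewer $2$-cell, whose boundary word is $w'=w_1\,v^{\pm1}\,w_2$. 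By the inductive hypothesis $[w']$ is idempotent. But $u=v$ is a defining relation of $S$, so $[u]=[v]$ and hence $[w]=[w_1][u]^{\pm1}[w_2]=[w_1][v]^{\pm1}[w_2]=[w']$, which is idempotent. This completes the induction. I emphasise that it is essential to peel along an \emph{entire} side of a cell: this replaces one whole half of a relation by the other and so leaves the element of $S$ unchanged, whereas cutting along an arbitrary directed transversal would only preserve the image in $G=S/\sigma$ and would lose idempotency information.

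It remains to prove the geometric lemma: every reduced van Kampen diagram over an Adian presentation that contains a $2$-cell has a $2$-cell with a complete side on its boundary. This is the main obstacle. The argument will exploit that, over an Adian presentation, every $2$-cell is two-sided, so its boundary consists of exactly two positively labelled directed paths running from a common initial corner to a common terminal corner, together with Lemma~\ref{lem0} and the structural lemma above, which guarantee that the positive edges organise into directed transversals with no interior sources or sinks and no directed cycles. I would fix a boundary edge $e$ and the unique $2$-cell $\pi$ carrying $e$ on one of its sides $s$, and then run an innermost/extremal argument: if $s$ did not lie wholly on $\partial\Delta$, then at a vertex where $s$ re-enters the interior there would be a further $2$-cell lying between $s$ and the boundary, strictly closer to the boundary; by planarity and the finiteness of $\Delta$ this descent must terminate at a cell whose exposed side cannot be shortcut in this way, that is, a cell with a full side on $\partial\Delta$. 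Making this extremal argument precise, using two-sidedness to control how the sides of distinct cells can meet, is where the real work lies; once it is in hand, the induction above finishes the proof.
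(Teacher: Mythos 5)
Your overall strategy --- reduce via Remark~\ref{rem0}, then induct on the number of $2$-cells by locating a $2$-cell with a full side on $\partial\Delta$ (a \emph{special $2$-cell} in the paper's terminology) and replacing that side by the other half of the relation --- is essentially the paper's strategy, and your algebraic step $[w]=[w_1][v]^{\pm1}[w_2]=[w']$ is a clean substitute for the paper's partial-order computation $gs^{-1}ut^{-1}h\leq grh$. But the proof is not complete, for three reasons. First, and most seriously, the existence of a special $2$-cell is exactly the content of the paper's Lemma~\ref{3}, which is proved by a nontrivial iterated-transversal argument: one repeatedly splits off smaller transversal subdiagrams (four cases according to where the endpoints of the new transversal land) until a transversal subdiagram with a single $2$-cell remains, and then uses the positive labelling of the transversal to force both corners of that cell onto $\partial\Delta$. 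You explicitly defer this to a sketched ``innermost'' argument and concede that making it precise ``is where the real work lies''; as it stands the central geometric fact of the whole theorem is asserted, not proved.

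Second, you cannot ``choose'' the base vertex off the interior of the exposed side: Remark~\ref{rem0} requires the boundary word read from the \emph{given} distinguished vertex to be idempotent, and idempotency is not preserved by cyclic conjugation in an inverse semigroup (for partial bijections $a\colon 1\mapsto 2,\ 4\mapsto 5$ and $b\colon 2\mapsto 1,\ 3\mapsto 4$, the product $ab$ is an idempotent but $ba$ is not). This is precisely why Lemma~\ref{3} produces \emph{two} special $2$-cells whose exposed arcs share no edge, so that at least one avoids the base vertex, and why the one-cell case needs the separate computation $yv^{-1}x=yy^{-1}x^{-1}x$. Third, your induction runs over arbitrary diagrams, but when $\Delta$ has several simple components or attached trees, other parts of the diagram may hang off interior vertices of the exposed side $u$; then the boundary word is not of the form $w_1u^{\pm1}w_2$ and excising the cell can disconnect the complex. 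The paper avoids this by first splitting $\Delta$ at cut vertices into simple components (handled by commuting idempotents and Proposition~\ref{prop0}) and running the $2$-cell induction only on diagrams with one simple component and no extremal vertex, where $\partial\Delta$ is a simple closed curve. All three gaps are fixable, but the first one carries the real content of the theorem.
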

Before proving this theorem, we introduce some definitions and prove
some lemmas.


\begin{definition}
A subdiagram ${\Delta}'$ of a van Kampen diagram $\Delta$ is called
a {\it simple component} of $\Delta$ if it is a maximal subdiagram
whose boundary is a simple closed curve.
\end{definition}

We remark that our definition does not require that the boundary of
a simple component of reduced diagram $\Delta$ needs to be labeled
by a reduced word. However, since no $2$-cell of a van Kampen
diagram over an {\em Adian presentation} has a boundary that
contains an extremal vertex (i.e. a vertex of degree $1$), it
follows that a simple component of such a diagram does not contain
any attached trees as part of its boundary. A simple component of 
 $\Delta$ is itself a van Kampen diagram.
The diagram $\Delta$ has a tree-like (or ``cactoid") structure of
simple components connected by (possibly trivial) arcs and possibly
with finitely many finite trees attached to the boundary of the
diagram (see Figure \ref{fig0}). A van Kampen diagram with no simple
components is just a finite (edge-labeled) tree.

\begin{figure}[h!]
\centering
\includegraphics[trim = 0mm 0mm 0mm 0mm, clip,width=3in]{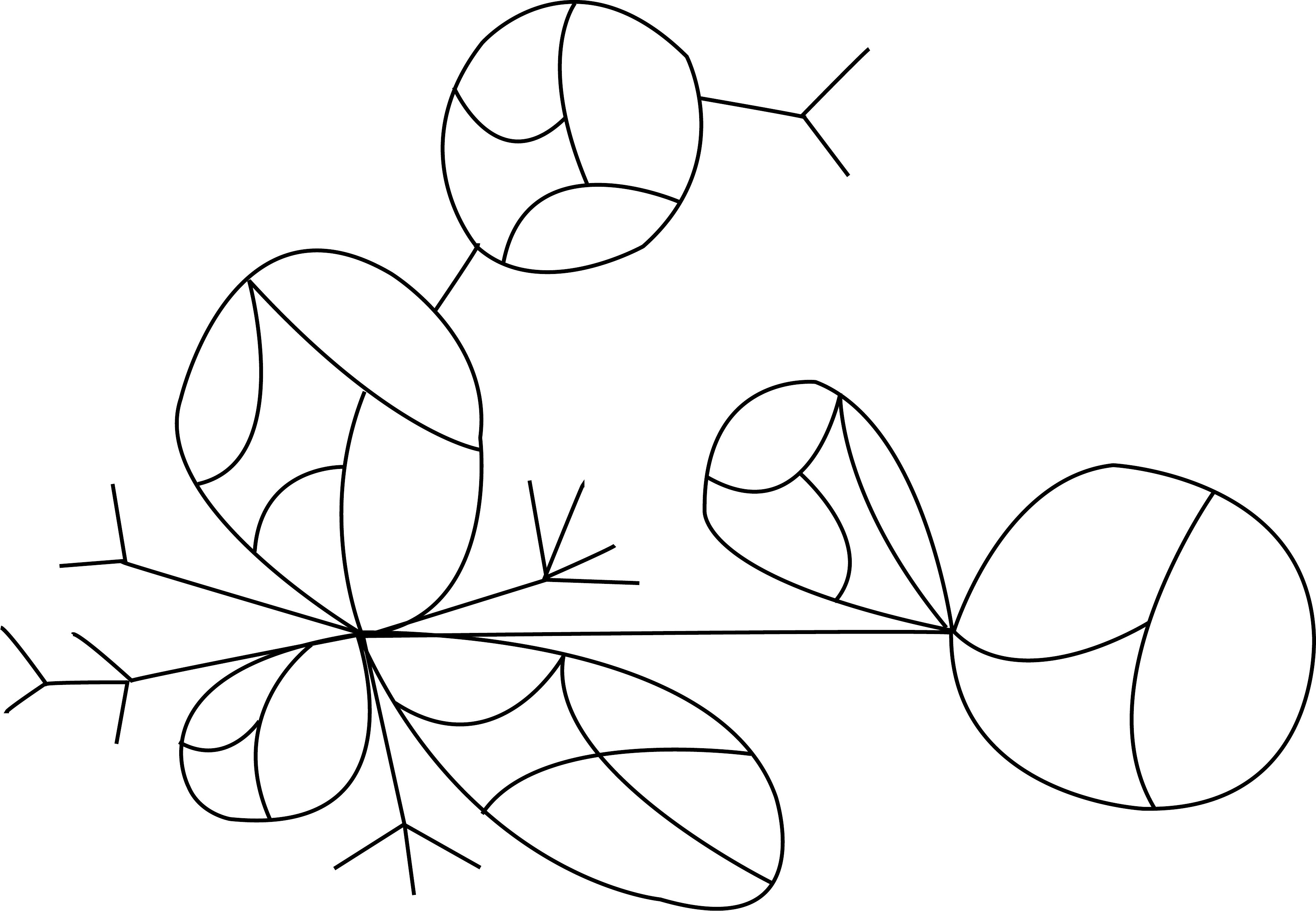}
\caption{ }
\label{fig0}
\end{figure}


\begin{definition}

For a van Kampen diagram $\Delta$ over an Adian presentation
$(X, R)$, a {\it transversal subdiagram} $\Delta'$ is a subdiagram of a simple component
of $\Delta$ such that $\Delta'$ has a boundary cycle of the form $pq$, where $p$ is a
directed transversal and $q$ is a subpath of a boundary cycle of the simple component in which $\Delta'$ is contained.


\end{definition}

\begin{lemma}\label{lem1}

If $\Delta$ is a van Kampen diagram with exactly one simple
component and no extremal vertex, then $\Delta$ has a directed
transversal if and only if it has more than one 2-cell. Furthermore,
any directed transversal of $\Delta$ divides $\Delta$ into two
transversal subdiagrams, each of which may be viewed as a van Kampen
diagram with exactly one simple component and no extremal vertex.

\end{lemma}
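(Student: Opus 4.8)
The plan is to treat $\Delta$ as a single simple component. Since $\Delta$ has exactly one simple component and no extremal vertex, it has no attached trees, so $\Delta$ coincides with its simple component and is homeomorphic to a closed disk whose boundary $\partial\Delta$ is a simple closed curve. Throughout I keep the standing assumption that $\Delta$ is reduced and over an Adian presentation, so that the structural conditions (i)--(iii) of the preceding lemma are available.

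For the equivalence I would argue through interior edges. First I claim that $\Delta$ has more than one $2$-cell if and only if it has a positively labeled interior edge. If $\Delta$ has at least two $2$-cells, then since $\partial\Delta$ is a simple closed curve bounding a disk that is tiled by the $2$-cells, two of these cells must abut along an interior edge (otherwise the filled disk could not be connected through its cells); by condition (i) every $2$-cell is two-sided, so all of its boundary edges are positively labeled, whence this shared edge is a positively labeled interior edge. Conversely, a single $2$-cell has all of its edges on $\partial\Delta$ and so has no interior edge at all. Granting this claim, the ``if'' direction follows from condition (iii): a positively labeled interior edge extends to a directed transversal. The ``only if'' direction is the contrapositive of the second half of the claim, since every edge of a directed transversal is interior, so a diagram with a single $2$-cell (hence with no interior edge) can have no directed transversal.

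For the second assertion, let $p$ be a directed transversal running from a boundary vertex $\alpha$ to a distinct boundary vertex $\beta$. By definition $p$ is a simple arc whose intermediate vertices are all interior to $\Delta$, so $p$ meets $\partial\Delta$ only at $\alpha$ and $\beta$. These two vertices split $\partial\Delta$ into two sub-arcs $q_1$ and $q_2$, and each of $p\cup q_1$ and $p\cup q_2$ is a simple closed curve. Appealing to the Jordan curve theorem inside the disk $\Delta$, the arc $p$ separates $\Delta$ into two subdiagrams $\Delta_1,\Delta_2$ with $\partial\Delta_i = p\cup q_i$. Choosing the appropriate base vertex and orientation, each $\Delta_i$ has a boundary cycle of the form $p\,q_i$ in which $p$ is a directed transversal and $q_i$ is a subpath of a boundary cycle of the simple component $\Delta$; hence each $\Delta_i$ is a transversal subdiagram.

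Finally I would verify that each $\Delta_i$ has exactly one simple component and no extremal vertex. Its boundary $p\cup q_i$ is a simple closed curve, so $\Delta_i$ has no pinch vertex and constitutes a single simple component. A boundary vertex of $\Delta_i$ lies on this simple closed curve and so has degree at least $2$, while an interior vertex of $\Delta_i$ is interior in $\Delta$ and, $\Delta$ having no extremal vertices or attached trees, also has degree at least $2$; in particular no new degree-$1$ vertex is created along $p$. Thus $\Delta_i$ has no extremal vertex and may be viewed as a van Kampen diagram with exactly one simple component and no extremal vertex. I expect the main obstacle to be precisely this last structural bookkeeping, together with the combinatorial form of the Jordan-curve separation: one must check carefully that cutting along $p$ yields genuine van Kampen subdiagrams whose boundaries are again simple closed curves and that splitting introduces no vertex of degree $1$.
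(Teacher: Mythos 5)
Your proposal is correct and follows essentially the same route as the paper's proof: both directions of the equivalence are reduced to the existence of an interior edge (using condition (iii) of the preceding lemma to extend such an edge to a directed transversal), and the second assertion is obtained by cutting along the transversal and observing that each piece has a simple closed curve as boundary. You merely supply more detail than the paper does on why two or more $2$-cells force an interior edge and why no degree-one vertices are created by the cut.
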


\begin{proof}
If $\Delta$ has a directed transversal, then it must have an interior
edge and hence it must have more than one 2-cell. Conversely if
$\Delta$ has more than one 2-cell and has just one simple component,
it must have an interior edge $e$. We extend this edge to a directed
 transversal $T$ of $\Delta$ by using Lemma 2 above. Denote
the initial vertex of $T$ by $\alpha$ and the terminal vertex of $T$
by $\beta$. Thus $\alpha$ and $\beta$ are on the boundary of
$\Delta$. This transversal $T$ divides $\Delta$ into two proper
subdiagrams ${\Delta}_1$ and ${\Delta}_2$ (see Figure \ref{fig1}).
Both ${\Delta}_1$ and ${\Delta}_2$ are clearly transversal
subdiagrams of $\Delta$. Also, the boundary of  $\Delta$ is  a
simple closed curve, and $T$ is a path between distinct boundary
vertices of $\Delta$ with all of its vertices distinct and all of
its vertices except $\alpha$ and $\beta$ interior to $\Delta$. It
follows that the boundary of each subdiagram ${\Delta}_i$ is
topologically a simple closed curve.  Hence each subdiagram
${\Delta}_i$ may be viewed as a van Kampen diagram with exactly one
simple component and no extremal vertex.
\end{proof}

\begin{figure}[h!]
\centering
\includegraphics[trim = 0mm 0mm 0mm 0mm, clip,width=2in]{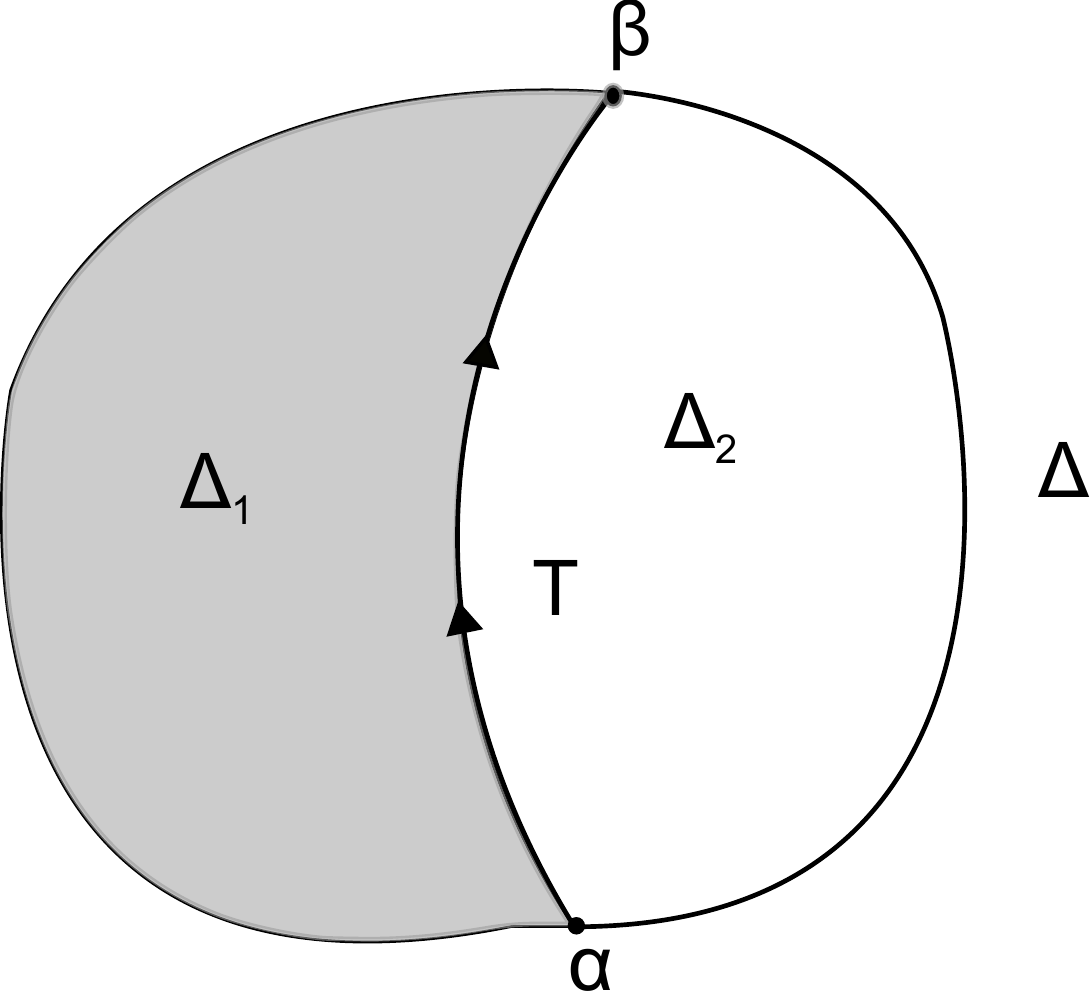}
\caption{A diagram $\Delta$ with two transversal subdiagrams.}
\label{fig1}
\end{figure}

\begin{definition}
For a van Kampen diagram $\Delta$ over an Adian presentation
$(X, R)$, a {\it special 2-cell} is a  2-cell, one
of whose two sides lies entirely on the boundary of $\Delta$.

\end{definition}




\begin{lemma}\label{3}


Let $\Delta$ be a van Kampen diagram over an Adian presentation
$(X, R)$ that has more than one 2-cell and that has
just one simple component and no extremal vertex. Then $\Delta$
contains at least two special 2-cells.

\end{lemma}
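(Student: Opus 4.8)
The plan is to reduce everything to a single statement about transversal subdiagrams and then prove that statement by induction on the number of 2-cells. First I would invoke Lemma \ref{lem1}: since $\Delta$ has more than one 2-cell, one simple component, and no extremal vertex, it has a directed transversal $T$, and $T$ divides $\Delta$ into two transversal subdiagrams $\Delta_1$ and $\Delta_2$ with boundary cycles $Tq_1$ and $Tq_2$, where $q_1,q_2$ are arcs of $\partial\Delta$ meeting only at the endpoints of $T$. Because every (interior) edge of $T$ borders a 2-cell on each side, both $\Delta_1$ and $\Delta_2$ contain at least one 2-cell. A 2-cell belongs to exactly one of $\Delta_1,\Delta_2$, and $q_1,q_2$ overlap only in the endpoints of $T$; hence it suffices to produce, in each $\Delta_i$, a special 2-cell of $\Delta$ whose special side lies on $q_i$, since these two cells are then automatically distinct.

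So the core is the following sub-lemma, which I would prove by induction on the number $m$ of 2-cells: \emph{a transversal subdiagram $\Delta'$ with boundary cycle $pq$ (with $p$ a directed transversal and $q$ a subpath of $\partial\Delta$) and $m\ge 1$ contains a special 2-cell of $\Delta$ one of whose sides lies entirely on $q$.} For the base case $m=1$, the single 2-cell is two-sided, with $p$ one side and $q$ the other; since $q\subseteq\partial\Delta$, that cell is special with special side $q$.

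For the inductive step I would extend an interior edge of $\Delta'$ to a directed transversal $T'$ of $\Delta'$, with endpoints $x,y$ on $\partial\Delta'=p\cup q$. The key idea is to splice $T'$ together with a suitable initial or terminal segment of $p$ so as to cut off a smaller region whose outer boundary arc is a subpath of $q$. Concretely, using that $\Delta$ has no directed cycles (condition (ii) of the preceding three-part lemma), fix a height function that strictly increases along positive edges, so that $p$ and $T'$ both ascend. If both $x,y$ lie on $q$, then $T'$ and the subarc of $q$ between them already bound a transversal subdiagram with arc contained in $q$. If an endpoint of $T'$ lies on $p$, I would attach to $T'$ the segment of $p$ running from $\alpha$ up to that endpoint, or from that endpoint up to $\beta$, chosen so that the concatenation is again a directed path; since the edges of $p$ are interior to $\Delta$, this concatenation is a genuine directed transversal $p''$, and it cuts off a region whose remaining boundary arc is either a subpath of $q$ or all of $q$. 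In every case the region is a transversal subdiagram with strictly fewer 2-cells than $\Delta'$, so the induction hypothesis supplies the desired special cell with side on $q$.

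The main obstacle is this inductive step, where two things must be checked. First, that the spliced path $p''$ is genuinely directed, simple, and has interior intermediate vertices, so that it really is a directed transversal: this is exactly where the monotonicity of the height function, and hence the absence of directed cycles, is used (the $p$-endpoint of $T'$ is taken as initial or terminal according to its height, so that $p$ and $T'$ join consistently). Second, that the cut-off region is again a van Kampen diagram with one simple component and no extremal vertex, with a strictly smaller 2-cell count, so that the induction hypothesis applies. Both verifications are carried out exactly as in the proof of Lemma \ref{lem1}, by checking that cutting a subdiagram bounded by a simple closed curve along a directed transversal yields two such subdiagrams.
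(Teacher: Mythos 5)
Your proposal is correct and follows essentially the same route as the paper: cut $\Delta$ along a directed transversal, then repeatedly splice a new transversal of a piece with initial or terminal segments of the old one (the splice being consistently oriented precisely because the diagram has no directed cycles) until a one--2-cell transversal subdiagram is cut off on each side. The only glossed point is your base case: the directed transversal $p$ need not literally be one side of the remaining 2-cell --- one must observe, as the paper does, that the source and sink of the cell cannot be interior vertices of the directed path $p$ (the orientation of $p$ would reverse there), whence both lie on $\partial\Delta$ and one full side of the cell lies on $q\subseteq\partial\Delta$.
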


\begin{proof}




As in the proof of Lemma \ref{lem1} we may choose a directed
transversal $T$ from a vertex $\alpha$ on $\partial \Delta$ to a
vertex $\beta$ on $\partial \Delta$ that divides the diagram
$\Delta$ into two subdiagrams $\Delta_1$ and $\Delta_2$.

   If both  of the subdiagrams  $\Delta_1$ and $\Delta_2$ consist of only one 2-cell,
   then we are done. Otherwise, we repeatedly subdivide each of these subdiagrams  to find
    special $2$-cells of $\Delta$. Without loss of generality, we pick $\Delta_1$
   and find
   an interior edge $e_1$ in $\Delta_1$. We extend $e_1$ to a directed transversal $T_1$
   of $\Delta_1$. Let ${\alpha}_1$ be the initial vertex of $T_1$ and ${\beta}_1$ the terminal
   vertex of $T_1$.
   This transversal $T_1$  divides $\Delta_1$ into two proper
   subdiagrams.
   We claim that at least one of them is a transversal subdiagram of $\Delta$.
   To see this we
   consider the following four cases.

 \textit{Case 1:} Suppose ${\alpha}_1$ and ${\beta}_1$ lie on the boundary
 $\partial \Delta_1\cap\partial\Delta$. Since $T_1$ is a positively labeled path,
 the collection of 2-cells and their boundaries
 bounded by the path $T_1$ and the path from ${\beta}_1$ to  ${\alpha}_1$
  along $\partial \Delta_1\cap\partial\Delta$ forms a transversal subdiagram of $\Delta$,
 which contains fewer 2-cells than $\Delta_1$ (see Figure \ref{fig2}).

\begin{figure}[h!]
\centering
\includegraphics[trim = 0mm 0mm 0mm 0mm, clip,width=2in]{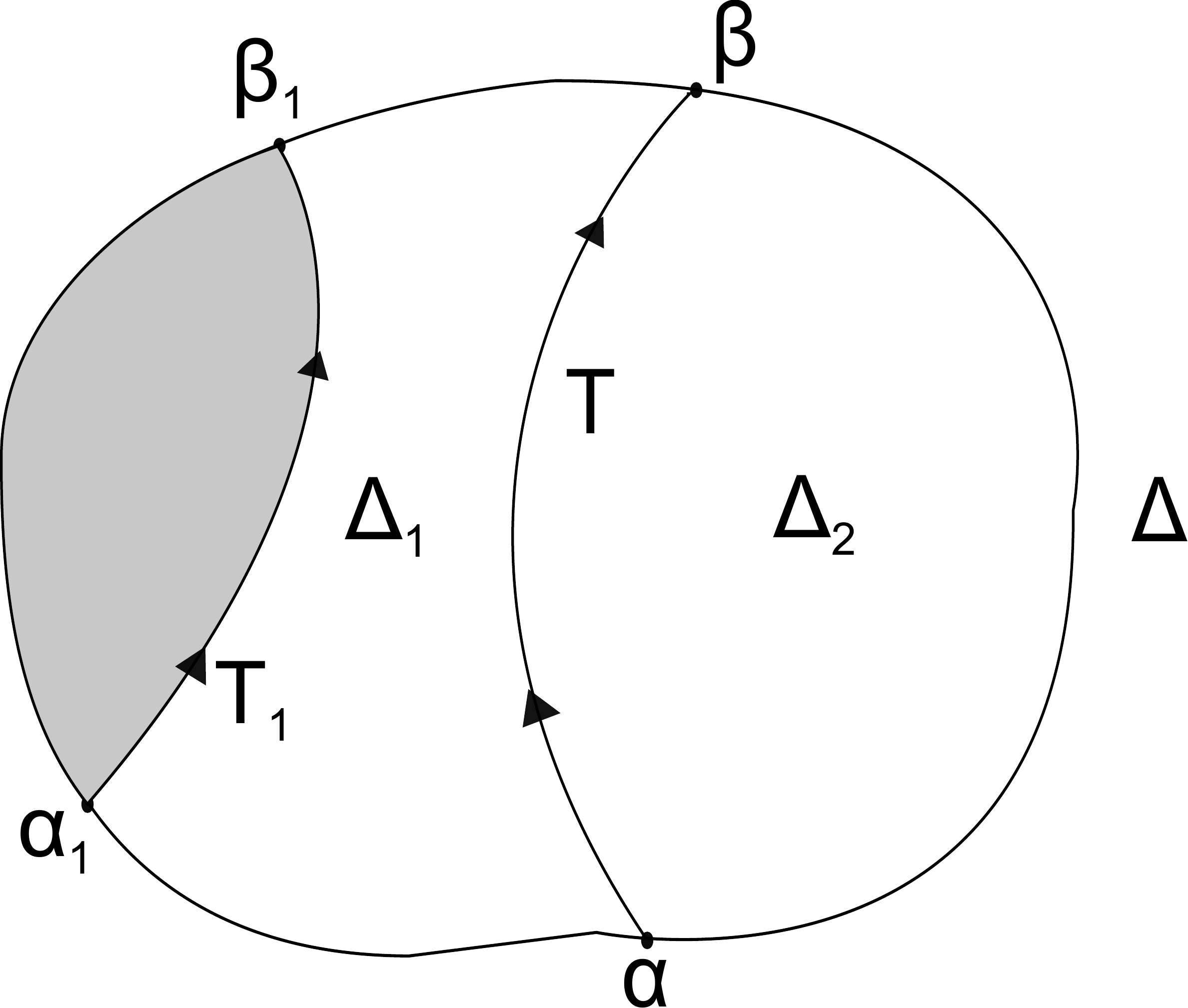}
\caption{ }
\label{fig2}
\end{figure}

\textit{Case 2:} Suppose both ${\alpha}_1$ and ${\beta}_1$ lie on
the transversal $T$. Note that if the transversals $T$ and $T_1$ are
oppositely oriented, then our van Kampen diagram $\Delta$ contains a
directed (positively labeled) cycle, which contradicts Lemma 2
above. Hence both transversals will have the same orientation. Let
$T_2$ be the composition of the path along $T$ from $\alpha$ to
${\alpha}_1$ followed by the path $T_1$, followed by the path along
$T$ from ${\beta}_1$ to $\beta$ (see Figure \ref{fig3}). Clearly
$T_2$ is a positively labeled path that is a transversal of
$\Delta$.

\begin{figure}[h!]
\centering
\includegraphics[trim = 0mm 0mm 0mm 0mm, clip,width=3.5in]{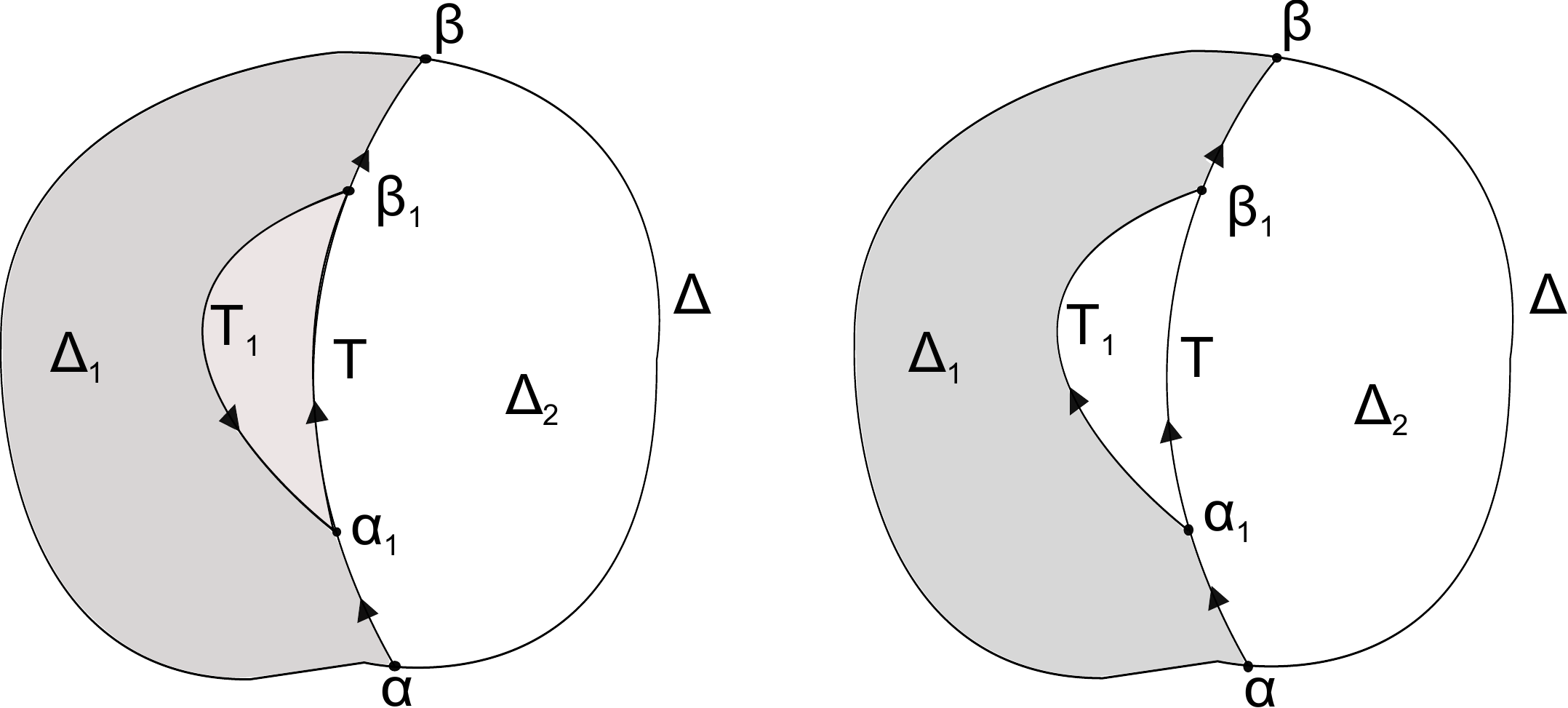}
\caption{ }
\label{fig3}
\end{figure}

The collection of 2-cells and their boundaries bounded by $T_2$ and
the path that goes from $\beta$ to $\alpha$ along $\partial
\Delta_1\cap\partial\Delta$ forms a transversal subdiagram of
$\Delta$ that contains fewer 2-cells than $\Delta_1$.

\textit{Case 3:} Suppose ${\alpha}_1$ lies on the boundary $\partial
\Delta_1\cap\partial\Delta$ and ${\beta}_1$ lies on the path $T$.
Let $T_2$ be the composition of the path $T_1$ and the path along
$T$ from ${\beta}_1$ to $\beta$ (see Figure \ref{fig4}). Again,
$T_2$ is a positively labeled path that is a transversal of
$\Delta$.
 The
 collection of 2-cells bounded by $T_2$ and the path from $\beta$ to
 ${\alpha}_1$ along $\partial\Delta_1\cap\partial\Delta$
 forms a transversal subdiagram of $\Delta$ that contains fewer regions than $\Delta_1$.

\begin{figure}[h!]
\centering
\includegraphics[trim = 0mm 0mm 0mm 0mm, clip,width=2in]{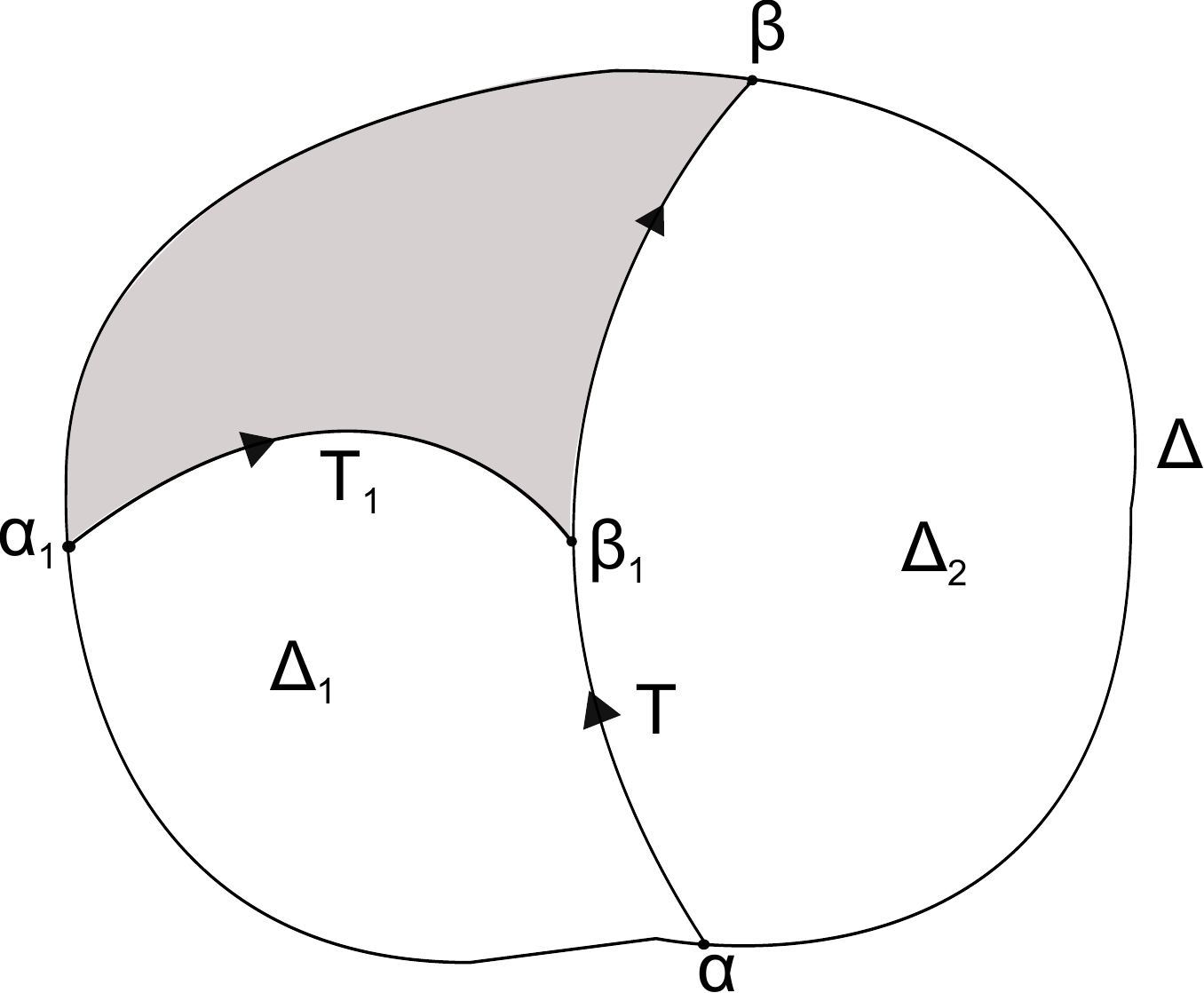}
\caption{ }
\label{fig4}
\end{figure}

 \textit{Case 4:} Suppose that ${\alpha}_1$ lies on the path $T$ and
 ${\beta}_1$
 lies on the boundary $\partial\Delta_1\cap\partial\Delta$. In this
 case let $T_2$ be the composition of the path along $T$ from
 $\alpha$ to ${\alpha}_1$ and the path $T_1$ (see Figure \ref{fig5}).
  Then the
 collection of 2-cells bounded by $T_2$ and the path from ${\beta}_1$ to $\alpha$
  along $\partial\Delta_1\cap\partial\Delta$ forms a
 transversal
 subdiagram of $\Delta$ that contains fewer 2-cells than $\Delta_1$.

\begin{figure}[h!]
\centering
\includegraphics[trim = 0mm 0mm 0mm 0mm, clip,width=2in]{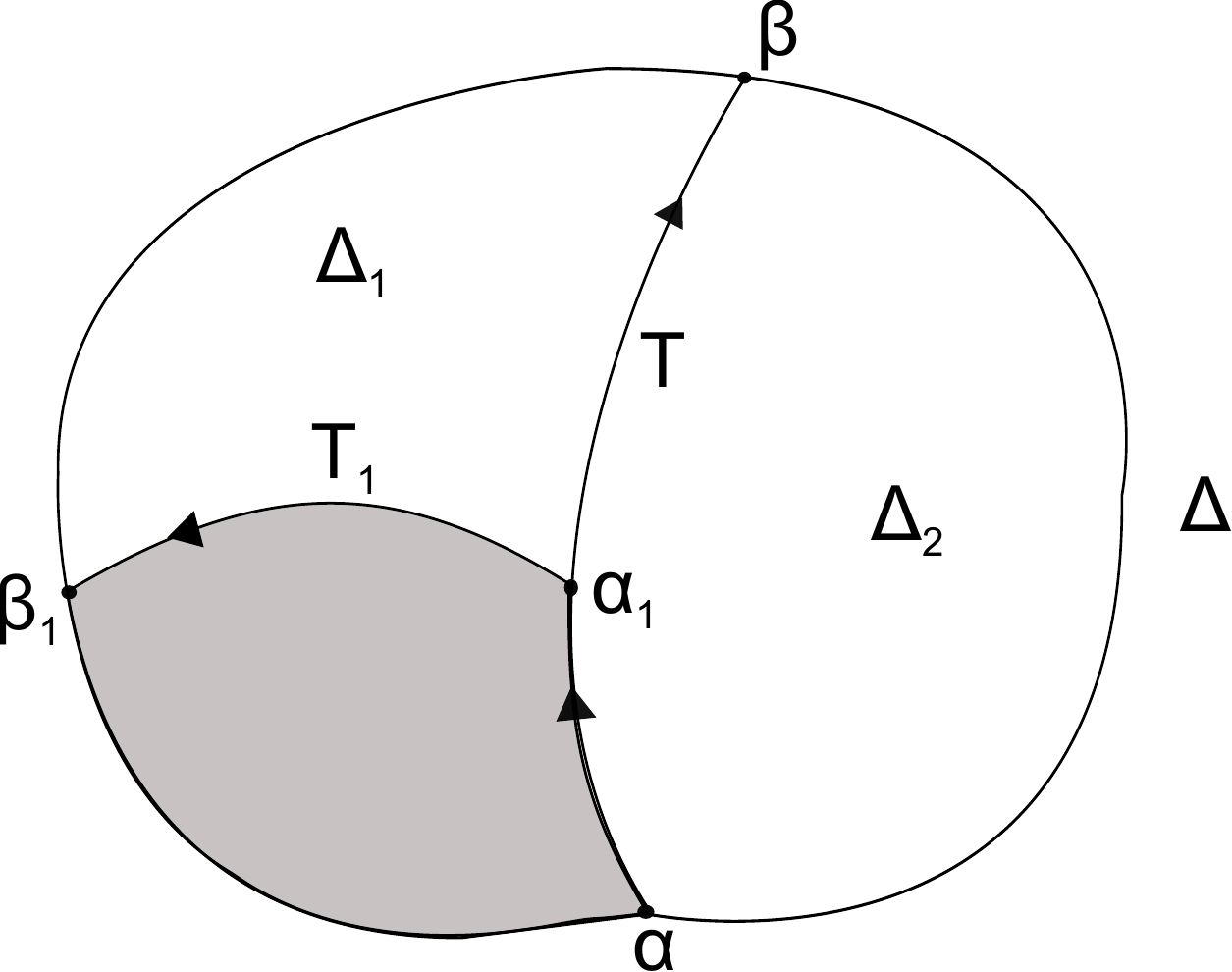}
\caption{ }
\label{fig5}
\end{figure}

Since our original diagram has only finitely many 2-cells, this
process eventually terminates and we get a minimal transversal
subdiagram $D$ of $\Delta$ inside ${\Delta}_1$ that consists of only
one 2-cell. This 2-cell $D$ is bounded by a cycle $pq$ where $p$ is
a directed transversal of ${\Delta}_1$ and $q$ is a path along the
boundary of $\Delta$. Since $D$ is a 2-cell, it has a boundary label
of the form $uv^{-1}$ for some relation $(u,v) \in R$. Since $p$ is
positively labeled, neither the initial vertex nor the terminal
vertex of the 2-cell $D$ can lie on $p$ (or else the orientation of
$p$ would change at that vertex). Hence  both the initial and
terminal vertices of $D$ lie on the boundary of $\Delta$, and so $D$
is a special 2-cell of $\Delta$. A similar argument yields a special
2-cell of $\Delta$ inside ${\Delta}_2$.

\end{proof}

\begin{remark}\label{rem1}

We remark for future use that by Lemma \ref{lem1}, the directed
transversal $p$ constructed in the proof of Lemma \ref{3} above
divides $\Delta$ into two subdiagrams, one of which is the special
2-cell $D$ and the other of which has just one simple component.

\end{remark}







 \begin{lemma}\label{5}

If $\Delta$ is a van Kampen diagram over an Adian presentation
$P=(X, R)$ and $\Delta$ has just one simple component
and no extremal vertex, then any word labeling a boundary cycle of
$\Delta$, starting and ending at any vertex 0 on $\partial \Delta$,
is an idempotent in the inverse monoid $S = Inv\langle X \vert R\rangle$.

\end{lemma}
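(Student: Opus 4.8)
The plan is to prove Lemma \ref{5} by induction on the number of 2-cells in $\Delta$. The key idea is that an idempotent in an inverse semigroup is precisely an element $w$ whose image in the maximum group image $S/\sigma$ is the identity and which, by Remark \ref{rem0}, labels the boundary of a van Kampen diagram; more usefully, I will build up the boundary word as a product of conjugates that, by Proposition \ref{prop0}(iii), is forced to be idempotent. The base case is when $\Delta$ consists of a single 2-cell. Here the boundary cycle starting at any vertex $0$ is a cyclic conjugate of $uv^{-1}$ for some relation $(u,v)\in R$; reading this from the base vertex gives a word of the form $w = s (u v^{-1}) s^{-1}$ (or $s(vu^{-1})s^{-1}$) where $s$ is the path from $0$ to the initial vertex of the 2-cell. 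Since $u = v$ in $S$, the word $uv^{-1}$ represents an idempotent (it equals $uu^{-1}$ in $S$), and by Proposition \ref{prop0}(ii) any conjugate $s(uu^{-1})s^{-1}$ of an idempotent by an element of $S$ is again an idempotent. This settles the single-cell case.

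For the inductive step, I would invoke Lemma \ref{3} together with Remark \ref{rem1}: since $\Delta$ has more than one 2-cell, one simple component, and no extremal vertex, there is a directed transversal $p$ dividing $\Delta$ into a special 2-cell $D$ and a complementary subdiagram $\Delta''$, where $\Delta''$ again has exactly one simple component and no extremal vertex and strictly fewer 2-cells. The transversal $p$ is a positively labeled path between two boundary vertices, say from $\gamma$ to $\delta$. The strategy is to read the boundary cycle of $\Delta$ based at $0$ and factor it through the cut path $p$. Writing $r$ for the path along $\partial\Delta$ from $0$ to $\gamma$, I can express the full boundary word of $\Delta$ (read counterclockwise from $0$) as a product that, after inserting the pair $p^{-1}p$ (which labels a word equal to a fresh idempotent, being of the form $t t^{-1}$ in $S$), splits into a conjugate of the boundary word of $D$ and a conjugate of the boundary word of $\Delta''$.

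Concretely, the boundary word $w$ of $\Delta$ factors as $w = a\,b$ where $a$ labels the boundary of one side of the cut and $b$ the other. By inserting $p^{-1} p$ at the cut vertex, I can rewrite $w$ as (a conjugate of) the boundary word $w_D$ of the special 2-cell $D$ based at $\gamma$, multiplied by (a conjugate of) the boundary word $w''$ of $\Delta''$ based at the same vertex, with suitable connecting paths. By the base case, $w_D$ (suitably based) is idempotent; by the inductive hypothesis applied to $\Delta''$, the word $w''$ (suitably based) is idempotent. Conjugating each by the connecting path $r$ keeps them idempotent via Proposition \ref{prop0}(ii), and since idempotents commute and their product is idempotent, the product equals an idempotent. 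Finally, the insertion of $p^{-1}p = tt^{-1}$ only multiplies $w$ by an idempotent on the relevant factor, so by Proposition \ref{prop0}(iii) this does not affect the conclusion: the resulting word $w$ is below the product of the two idempotent factors in the natural order, hence is itself idempotent.

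The main obstacle I anticipate is bookkeeping the base points and the connecting paths correctly when I cut along $p$ and reassemble. The subtlety is that the boundary cycle of $\Delta$ based at $0$ must be split so that the factor coming from $D$ and the factor coming from $\Delta''$ are each read as genuine boundary cycles of those subdiagrams (so that the base case and induction hypothesis truly apply), and the two must share a common conjugating path. Getting the orientations right — ensuring $p$ is traversed forwards in one factor and backwards in the other so the inserted $p^{-1}p$ cancels formally while contributing only an idempotent $tt^{-1}$ in $S$ — is where care is needed, and this is exactly where Proposition \ref{prop0}(iii) does the heavy lifting: even if the raw boundary word $w$ is not literally the product of the two sub-boundary words but only that product with some idempotents inserted, part (iii) guarantees $w$ lies below an idempotent and is therefore idempotent by part (i).
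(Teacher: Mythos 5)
Your overall skeleton (induction on the number of $2$-cells, Lemma \ref{3} and Remark \ref{rem1} to peel off a special $2$-cell along a directed transversal $p$, induction hypothesis applied to the complementary subdiagram) matches the paper's proof. But there is a genuine gap in how you transfer idempotency, and it appears in both the base case and the inductive step: you have the natural partial order pointing the wrong way. Inserting idempotents into a product moves you \emph{down} in the natural partial order (Proposition \ref{prop0}(iii) says $x_0e_1x_1\cdots e_nx_n \le x_0x_1\cdots x_n$), so when you insert $p^{-1}p$ into the boundary word $w$ you obtain a word $w'$ with $w' \le w$, not $w \le w'$. Showing that $w'$ is an idempotent then tells you only that $w$ lies \emph{above} an idempotent, and the implication ``$e \le w$ with $e$ idempotent implies $w$ idempotent'' is precisely the $E$-unitary property the whole paper is trying to establish --- so this step is circular. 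The same error occurs in your base case: the boundary cycle of a single $2$-cell read from $0$ is the cyclic conjugate $w_2w_1$ of $uv^{-1} \equiv w_1w_2$, which is not the word $s(uv^{-1})s^{-1}$; the latter equals $(s^{-1}s)(w_2w_1) \le w_2w_1$ in $S$, so knowing that it is an idempotent does not make $w_2w_1$ an idempotent.

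What is missing is the one place where the semigroup relation $u=v$ must actually be used. The paper's base case computes the cyclic conjugate directly: if $u\equiv xy$, then $yv^{-1}x = yu^{-1}x = y(xy)^{-1}x = yy^{-1}x^{-1}x$, an honest product of idempotents. In the inductive step the paper does not combine two idempotent boundary words at all: it writes the boundary of $\Delta$ as $gs^{-1}ut^{-1}h$ with $u$ the outer side of the special cell, uses the relation to replace $u$ by $v = srt$ (an \emph{equality} in $S$), obtaining $g(s^{-1}s)r(tt^{-1})h$, and then \emph{deletes} the idempotents $s^{-1}s$ and $tt^{-1}$ to move \emph{up} to $grh$, the boundary word of the smaller diagram. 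Thus $w \le grh$, the word $grh$ is idempotent by induction, and Proposition \ref{prop0}(i) finishes. Your decomposition of $w$ into a $D$-factor and a $\Delta''$-factor can be repaired only by making this substitution along the defining relation of the special cell; the purely formal insertion of $p^{-1}p$ cannot do the job.
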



\begin{proof}

We apply  induction on the number of 2-cells of $\Delta$. If
$\Delta$ contains only one 2-cell, then a boundary label of this
2-cell is a cyclic conjugate of $uv^{-1}$, for some $(u,v)\in R$. We
know that $uv^{-1}$ is an idempotent in $S$, because in the inverse
semigroup $S$, $uv^{-1}=vv^{-1}$.
 We  show that any cyclic conjugate of $uv^{-1}$ is also an idempotent in $S$.
Suppose that $u$ factors as a product of two subwords $u \equiv xy$
in $X^+$. We consider the word $y v^{-1}x$ and show that it is an
idempotent in $S$. Note that $y v^{-1}x =y u^{-1}x =y(xy)^{-1}x
=yy^{-1}x^{-1}x$, a product of two idempotents in $S$, hence it is
an idempotent in $S$. We remark that it follows that the inverse of
this word, namely $x^{-1}vy^{-1}$, is also an idempotent of $S$ (in
fact it equals $y v^{-1}x$ in $S$), so all boundary labels of the
2-cell $\Delta$ are idempotents in $S$, no matter which orientation
of the boundary of  $\Delta$ is chosen.

Suppose the conclusion of the lemma is true for all van Kampen
diagrams consisting of one simple component and $k$
 2-cells. Let $\Delta$ be a van Kampen diagram consisting of one simple component and $(k+1)$
2-cells. Then by Lemma \ref{3}, there are at least two special
2-cells in $\Delta$. These special 2-cells do not share an edge that lies on
 $\partial \Delta$. So at least one of them (say the 2-cell $\Pi$),
 has the property that the distinguished vertex 0 is an initial or terminal vertex
  of at least one edge
 in ${\partial} {\Delta} - {\partial} {\Pi}$ (i.e. it  does not lie in the
 ``interior" of $\partial \Pi \cap \partial \Delta$).

 The cell $\Pi$ has
a boundary label $uv^{-1}$ for some $(u,v)\in R$, such that either
$u$ or $v$ labels a path on $\partial \Delta$.  Without loss of
generality, we assume that $u$ labels a path on $\partial \Delta$.
Denote the initial vertex of this path by $\alpha$ and the terminal
vertex by $\beta$. By Remark \ref{rem1}, we may suppose that $v =
srt$ where $s$ labels a path on the boundary of $\Delta$ from
$\alpha$ to some vertex ${\alpha}_1$, $r$ labels a directed
transversal from ${\alpha}_1$ to ${\beta}_1$, and $t$ labels a path
on the boundary of $\Delta$ from ${\beta}_1$ to $\beta$ (see Figure
\ref{fig6}). The distinguished vertex 0 of the van Kampen diagram
$\Delta$ does not lie in the interior of the part of the boundary of
$\Delta$ that is labeled by $tu^{-1}s$. Hence the label on the
boundary cycle of $\partial{\Delta}$ that starts and ends at 0 is of
the form $\ell(\partial \Delta) \equiv gs^{-1}ut^{-1}h$ for some
words $g,h \in (X \cup X^{-1})^*$. Then in the inverse semigroup
$S$, $gs^{-1}ut^{-1}h = gs^{-1}vt^{-1}h = gs^{-1}srtt^{-1}h \leq
grh$, by Proposition \ref{prop0}. But by Lemma \ref{lem1} and Remark
\ref{rem1},  $grh$ labels a boundary cycle of a van Kampen diagram
with only one simple component and fewer 2-cells than $\Delta$.
Hence this word  is an idempotent in $S$ by the induction
hypothesis. Hence $\ell(\partial \Delta) \equiv gs^{-1}ut^{-1}h$ is
an idempotent of $S$, again by Proposition \ref{prop0}. As we
remarked before, if we choose the opposite orientation on the
boundary of $\Delta$, the corresponding boundary label is also an
idempotent of $S$.

\begin{figure}[h!]
\centering
\includegraphics[trim = 0mm 0mm 0mm 0mm, clip,width=2.5in]{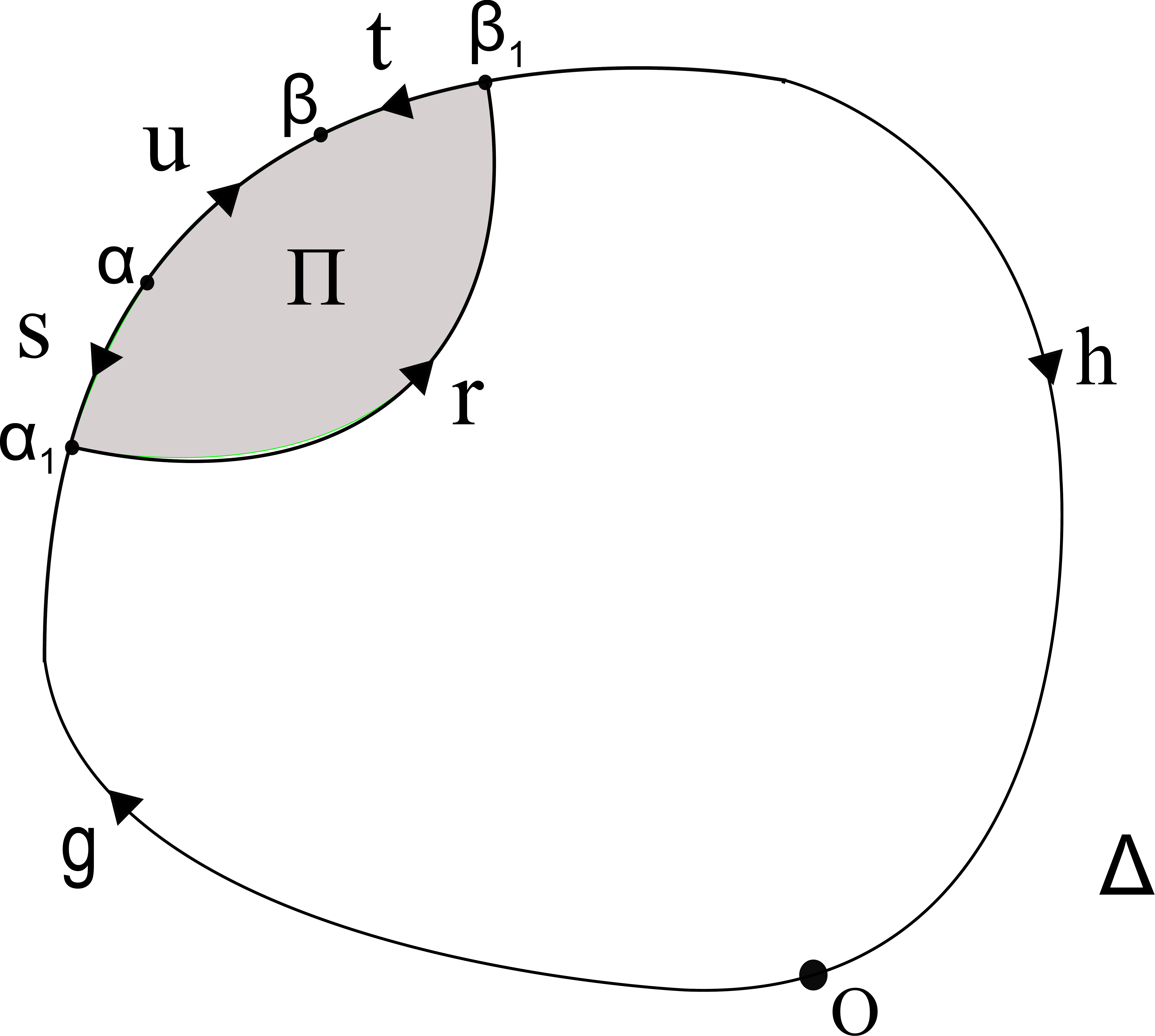}
\caption{ }
\label{fig6}
\end{figure}


\end{proof}

\begin{Cor}\label{cor1}
If $\Delta$ is a  van Kampen diagram over an Adian presentation
$P=(X, R)$ that has just one simple component, then any
word labeling a boundary cycle of $D$, starting and ending at any
vertex 0 on $\partial D$, is an idempotent in the inverse monoid $S
= Inv\langle X \vert R\rangle$.
\end{Cor}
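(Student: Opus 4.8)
The plan is to deduce this from Lemma~\ref{5} by inductively trimming away the trees attached to the simple component. Since $\Delta$ has exactly one simple component $C$, the cactoid description recorded earlier shows that $\Delta$ consists of $C$ together with finitely many finite trees attached to $\partial C$; thus every edge of $\Delta$ either belongs to $C$ or lies in one of these trees. I will induct on the number $t$ of \emph{tree edges}, that is, the edges of $\Delta$ not lying in $C$.

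For the base case $t=0$ the diagram is exactly its simple component and so has no extremal vertex. Hence Lemma~\ref{5} applies directly and the word labeling the boundary cycle based at any vertex $0 \in \partial\Delta$ is an idempotent of $S$.

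For the inductive step, suppose $t \geq 1$. Then $\Delta$ has an extremal (degree-one) vertex $d$; let $e$ be the unique edge at $d$, let $a \in (X \cup X^{-1})^*$ be its label read from its non-$d$ endpoint $c$ toward $d$, and let $\Delta'$ be the diagram obtained by deleting $d$ and $e$. Then $\Delta'$ is again a van Kampen diagram with the single simple component $C$ and with $t-1$ tree edges, so the inductive hypothesis applies to $\Delta'$. The point is to express the boundary label based at $0$ in terms of a boundary label of $\Delta'$. If $0 \neq d$, then since $d$ has degree one the boundary walk reaches $c$, runs out to $d$ and immediately back, so the boundary label of $\Delta$ factors as $g\,(aa^{-1})\,h$, where $gh$ is the boundary label of $\Delta'$ based at $0$; here $aa^{-1}$ is an idempotent, so Proposition~\ref{prop0}(iii) gives $g(aa^{-1})h \leq gh$, and since $gh$ is idempotent by induction, part~(i) of that proposition shows the boundary label of $\Delta$ is idempotent. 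If instead $0 = d$, the boundary walk based at $0$ reads $a\,\ell'\,a^{-1}$, where $\ell'$ is the boundary label of $\Delta'$ based at $c$; by induction $\ell'$ is idempotent, and then Proposition~\ref{prop0}(ii) shows $a\ell' a^{-1}$ is an idempotent of $S$.

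The main obstacle is the bookkeeping in the inductive step, and in particular the case $0 = d$, where the base vertex sits on the very tree being trimmed. This is exactly where one must pass from the interleaving estimate of Proposition~\ref{prop0}(iii) to the conjugation statement of part~(ii), so that the base point may be moved off the trees and onto the simple component, where Lemma~\ref{5} is available. Finally, the claim for the opposite orientation of $\partial\Delta$ is automatic, since the reversed boundary walk is labeled by the inverse word and the inverse of an idempotent equals that same idempotent.
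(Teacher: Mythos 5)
Your proof is correct and follows essentially the same route as the paper: reduce to Lemma~\ref{5} by stripping the attached trees, noting that this only inserts Dyck words (idempotents, handled by Proposition~\ref{prop0}(iii) and (i)) into the boundary label, or conjugates it (handled by Proposition~\ref{prop0}(ii)) when the base vertex sits on a tree. The only difference is organizational --- you peel off one leaf edge at a time by induction where the paper removes all trees at once --- and the only slip is a harmless orientation one (the walk based at $0=d$ reads $a^{-1}\ell'a$ rather than $a\ell'a^{-1}$, which is still an idempotent by part~(ii)).
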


\begin{proof}

The van Kampen diagram $\Delta$ either has no extremal vertex, in
which case the result follows from Lemma \ref{5} or else is obtained
from a van Kampen diagram $\Delta'$ with no extremal vertex and just
one simple component by adjoining finitely many finite trees  to the
boundary of  $\Delta'$. The result follows from Proposition
\ref{prop0} and the fact that the label on a boundary cycle of
$\Delta$ is obtained from the label $w$ on a boundary cycle of
$\Delta'$ either by inserting Dyck words (words with reduced form
$1$, which are idempotents in $S$), or if necessary conjugating the
resulting word by some word $u$ if the distinguished vertex 0 is on
an attached tree.

\end{proof}


 \noindent {\bf Proof of Theorem \ref{main}.}  Let $(X, R)$ be an Adian
 presentation and let $w$ be a word such that
  $w=1$ in $G = Gp\langle X \vert R\rangle$. Then $w$ labels a boundary cycle of a
  van Kampen diagram $\Delta$ starting and ending at some designated
  vertex $0$ on $\partial \Delta$.  We show that $w$ is an idempotent of
   $S = Inv \langle X \vert R \rangle$ by
  induction on the number of simple components of $\Delta$. If
  $\Delta$ has just one simple component, this follows from Corollary
  \ref{cor1}. Of course if $D$ has no simple components, then it is
  a finite tree, whose boundary label is a Dyck word, which is an
  idempotent of $S$.

Suppose that $\Delta$ has  $k > 1$ simple  components.
  Assume inductively that the word labeling a boundary cycle of any  van Kampen
  diagram with fewer than $k$ simple components, starting and ending at any
  vertex on its boundary, is an idempotent in $S$.

   It follows from the fact that $\Delta$ is simply connected that $\Delta$ has a cut
   vertex $\gamma$, i.e.,
   a vertex  on the boundary of $\Delta$ whose deletion
  separates  $\Delta$ into two or more connected components $K_1,
  K_2, \ldots K_n$. For each $i$, let $D_i$ be the 2-complex $D_i =
  K_i \cup \{{\gamma}\}$. Then each $D_{i}$ is a
   van Kampen diagram over $(X, R)$. The cut vertex can be chosen so
   that each subdiagram $D_i$ has fewer simple components than $\Delta$ (see Figure \ref{fig7}).

  Then $\Delta = D_1 \vee D_2 \vee \ldots \vee
  D_n$ is obtained by joining the diagrams $D_i$ at the vertex
  $\gamma$.

\begin{figure}[h!]
\centering
\includegraphics[trim = 0mm 0mm 0mm 0mm, clip,width=3.5in]{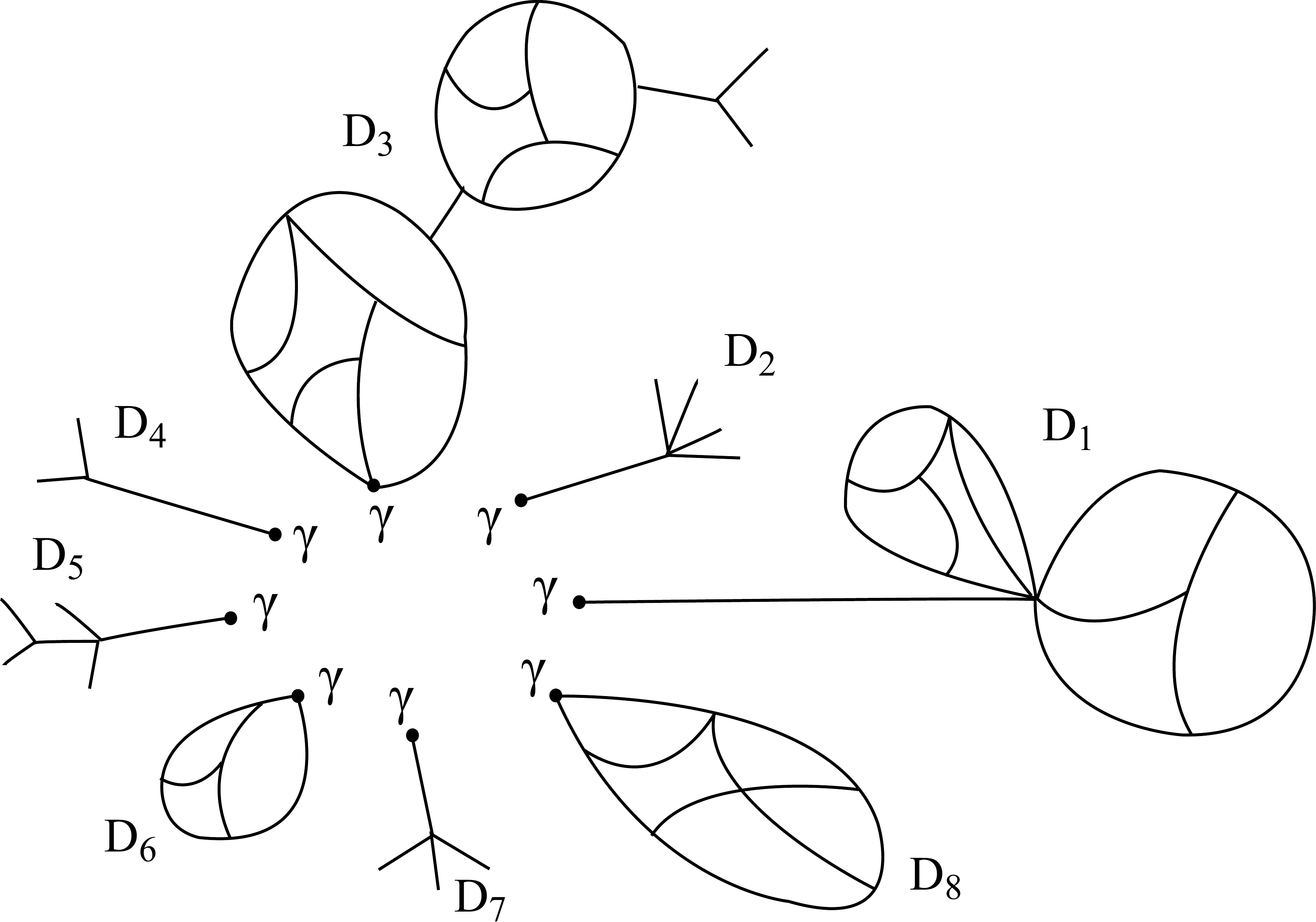}
\caption{ }
\label{fig7}
\end{figure}

 Let $s_i = \ell (q_i)$ be the label on the boundary cycle $q_i$
  around $D_i$ that
  starts and ends at $\gamma$ and respects the orientation of the boundary cycle $p$ of
  $\Delta$ labeled by $w$.
  Then $s_i$ is an idempotent of $S$ by the
  induction hypothesis.

  If 0 = $\gamma$, then the  boundary cycle $p$  is labeled by some permutation of the word
  $s_1 s_2 \ldots s_n$, a
  product of (commuting) idempotents in $S$. Therefore in $S$, $w = s_1 s_2 \ldots s_n$, and hence
  $w$ is an idempotent in $S$.


  If 0 is not equal to $\gamma$,
  then it must be a vertex on the
  boundary of exactly one of the  van Kampen diagrams  $D_i $.
  Let $s_i \equiv x_i y_i$ where $x_i$
  labels the portion of the boundary cycle $q_i$ of $D_i $ starting at
  $\gamma$ and ending at 0, and $y_i$ labels the portion of  $q_i$
  starting at 0 and ending at
  $\gamma$ in the orientation of $q_i$.
  Then the  boundary cycle $p$ of $\Delta$ labeled by $w$  is equal in $S$ to
  $ y_i ({\Pi}_{j \neq i}s_j) x_i$, where the product ${\Pi}_{j \neq i} s_j$
  is an idempotent of $S$. Hence
  $w = y_i({\Pi}_{j\neq i}s_j) x_j \leq y_i x_i$ in $S$, by Proposition \ref{prop0}. But by the
  induction assumption, $y_i x_i$ is an idempotent of $S$ since it is
  the label of a boundary cycle
   of $D_i $ starting at 0. It follows
  that the word $w$  is an
  idempotent in $S$, again by Proposition \ref{prop0}.

  Hence
  $S$ is $E$-unitary since the minimum group
  congruence $\sigma$ on $S$ is idempotent-pure.

\hfill $\Box$

\bigskip

We close the paper by showing that inverse monoids naturally
associated with Adian inverse semigroups of the type considered
above are also $E$-unitary. We first record a general fact about
positive presentations of inverse semigroups.


\begin{Prop}\label{prop1}

Let $S = Inv\langle X \vert R \rangle$ be a positive presentation of
an inverse semigroup, where $R=\{(u_i,v_i) \vert i\in I\}$ and
 $u_i,v_i\in X^+$. Then if $S$ is $E$-unitary, so is the inverse
 monoid $M = Inv\langle X \vert u_{i}v_{i}^{-1} = 1, \, i \in I \rangle$.
\end{Prop}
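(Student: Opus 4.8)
The plan is to transfer the $E$-unitary property from $S$ to $M$ through a canonical homomorphism $\phi \colon S \to M$, exploiting the fact that $S$ and $M$ have the same maximum group image. First I would establish the key algebraic fact that the defining relations $u_i v_i^{-1} = 1$ of $M$ already force $u_i = v_i$ to hold in $M$. Indeed, in $M$ the relation $u_i v_i^{-1} = 1$ together with its inverse $v_i u_i^{-1} = 1$ give
$$ u_i = 1\cdot u_i = (v_i u_i^{-1})u_i = v_i(u_i^{-1}u_i), \qquad v_i = 1\cdot v_i = (u_i v_i^{-1})v_i = u_i(v_i^{-1}v_i). $$
Since $u_i^{-1}u_i$ and $v_i^{-1}v_i$ are idempotents, these equations say precisely that $u_i \leq v_i$ and $v_i \leq u_i$ in the natural partial order, so by antisymmetry $u_i = v_i$ in $M$. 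I expect this to be the only genuine (though modest) obstacle, since everything afterward is formal.

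Consequently the defining relations of $S = Inv\langle X \vert u_i = v_i\rangle$ all hold in $M$, viewed as an inverse semigroup, so the universal property of $S$ yields an inverse-semigroup homomorphism $\phi \colon S \to M$ sending each generator to the corresponding generator; thus $\phi([w]_S) = [w]_M$ for every word $w \in (X\cup X^{-1})^*$, where $[w]_S$ and $[w]_M$ denote the elements of $S$ and $M$ represented by $w$. I would also record that the maximum group images coincide: using the standard fact that the maximum group image of a presented inverse semigroup is given by the associated group presentation, we have $M/\sigma = Gp\langle X \vert u_i v_i^{-1} = 1\rangle = Gp\langle X \vert u_i = v_i\rangle = S/\sigma$, and I denote this common group by $G$.

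Finally I would verify $E$-unitarity of $M$ directly from the definition. Let $m \in M$ lie in the preimage of the identity of $G$ under the natural map $M \to M/\sigma = G$, and write $m = [w]_M$ for some word $w$. Then $w = 1$ in $G$, so $[w]_S$ lies in the preimage of the identity under the natural map $S \to S/\sigma = G$; since $S$ is $E$-unitary, $[w]_S$ is an idempotent of $S$. Because $\phi$ is a homomorphism it carries idempotents to idempotents, and hence $m = \phi([w]_S)$ is an idempotent of $M$. This shows that the preimage of the identity of $G$ in $M$ is contained in the set of idempotents of $M$; the reverse containment is automatic, since any idempotent maps to the identity under any homomorphism onto a group. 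Therefore the natural homomorphism $M \to G$ is idempotent-pure, so $M$ is $E$-unitary. \hfill $\Box$
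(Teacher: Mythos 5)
Your proof is correct and follows essentially the same route as the paper: you establish $u_i = v_i$ in $M$ via the natural partial order, obtain the canonical homomorphism $S \to M$ through which the map onto the common maximum group image $G$ factors, and transfer idempotent-purity along it. The only difference is that you spell out in full the ``standard argument'' that the paper leaves implicit; the substance is identical.
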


\begin{proof}
Clearly both inverse semigroups $S$ and $M$ have the same maximal
group homomorphic image $G = Gp \langle X \vert R \rangle$.

If $u_iv_i^{-1}=1$ in $M$, then $v_i=u_i(v_i^{-1}v_i)\leq u_i$ in $M$. Also
$v_iu_i^{-1}=1$, which implies by the same argument that that
$u_i\leq v_i$. Hence $u_i=v_i$ in $M$. It follows from a standard
argument that the natural map from $S$ onto $G$ factors through $M$.
Hence if  $\sigma$ and $\sigma_1$ denote the natural maps from $S$
and $M$ onto $G$ respectively, the fact that $\sigma$ is
idempotent-pure implies that $\sigma_1$ is idempotent-pure, and so
$M$ is $E$-unitary.




\end{proof}

\begin{Cor}
 If $\langle X \vert R \rangle$ is an Adian presentation where $R=\{(u_i,v_i) \vert i\in
 I\}$, then $M = Inv\langle X \vert u_{i}v_{i}^{-1} = 1, \, i \in I
 \rangle$ is $E$-unitary.

\end{Cor}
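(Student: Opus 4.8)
The plan is to obtain this Corollary as an immediate consequence of the two principal results already established, namely Theorem \ref{main} and Proposition \ref{prop1}, by chaining them together. Since $\langle X \vert R \rangle$ is assumed to be an Adian presentation and $R = \{(u_i, v_i) : i \in I\}$ with $u_i, v_i \in X^+$, the inverse semigroup $S = Inv\langle X \vert R \rangle$ is precisely an inverse semigroup over an Adian presentation in the sense required by Theorem \ref{main}. The first step, therefore, is simply to invoke that theorem to conclude that $S$ is $E$-unitary.

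The second step is to verify that the hypotheses of Proposition \ref{prop1} are satisfied for this particular $S$. Proposition \ref{prop1} requires a positive presentation $S = Inv\langle X \vert R \rangle$ with $R = \{(u_i, v_i) : i \in I\}$ and $u_i, v_i \in X^+$, together with the assumption that $S$ is $E$-unitary. The first condition holds by the standing hypotheses on $R$, and the second has just been supplied by Theorem \ref{main}. Proposition \ref{prop1} then applies verbatim and yields that the associated inverse monoid $M = Inv\langle X \vert u_i v_i^{-1} = 1, \, i \in I \rangle$ is $E$-unitary, which is exactly the assertion of the Corollary.

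Because both ingredients have already been proved, I do not expect any genuine difficulty here; the statement is a packaging of prior work rather than a new argument. The only point requiring any attention is the bookkeeping observation that the presentation defining $S$ meets the two standing requirements simultaneously, being both positive and Adian, so that Theorem \ref{main} and Proposition \ref{prop1} can be composed. Once this is noted, the conclusion follows in a single line, and the proof amounts to citing \ref{main} to get $E$-unitarity of $S$ and then \ref{prop1} to transfer it to $M$.
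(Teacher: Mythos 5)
Your proposal is correct and follows exactly the paper's own argument: the paper proves this corollary by citing Theorem \ref{main} to get that $S$ is $E$-unitary and then applying Proposition \ref{prop1} to transfer that property to $M$. Your version merely spells out the hypothesis-checking in more detail.
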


\begin{proof}
This is immediate from Theorem \ref{main} and  Proposition
\ref{prop1}.
\end{proof}

\end{document}